\documentclass[12pt]{article}

\usepackage{amsmath,amssymb,amsthm,tikz}

\oddsidemargin 0in
\textwidth 6.5in
\textheight 9in
\topmargin 0in
\headheight 0in
\headsep 0in

\newcommand{\ZZ}{\mathbb Z}
\newcommand{\RR}{\mathbb R}
\newcommand{\Homeo}{\operatorname{Homeo}}
\newcommand{\rk}{\operatorname{rk}}

\newtheorem*{theorem}{Theorem}

\title{Totally disconnected groups (not) acting on two-manifolds}

\author{John Pardon\thanks{This research was conducted during the period the author served as a Clay Research Fellow.  The author was also partially supported by a Packard Fellowship and by the National Science Foundation under the Alan T.\ Waterman Award, Grant No.\ 1747553.}}

\date{6 July 2018}

\begin{document}

\maketitle

\begin{abstract}
We briefly survey the Hilbert--Smith Conjecture, and we include a proof of it in dimension two (where it is originally due to Montgomery--Zippin).
\end{abstract}
%MSC 2010 Primary 57S10, 57M60, 20F36, 57S05, 57N05; Secondary 54H15, 55M35, 57S17

Hilbert's Fifth Problem \cite{hilbert,hilbertbams} asks for an investigation into the extent to which assumptions of real analyticity in the theory of Lie group actions on manifolds can be relaxed to mere continuity.
This question admits a number of different precise formulations, which currently range from completely solved to wide open.
Our brief discussion is far from complete; we refer the reader to Tao \cite{tao} and Palais \cite{noticesgleason} for more details.

One precise formulation of Hilbert's Fifth Problem is the question of whether every topological manifold $G$ with continuous group operations $m:G\times G\to G$ and $i:G\to G$ has a unique real analytic structure for which the group operations are real analytic (in which case $G$ is called a Lie group).
This question was answered positively by Gleason \cite{gleason2} and Montgomery--Zippin \cite{montgomeryzippin}; see also the later work of Hirschfeld \cite{hirschfeld} and Goldbring \cite{goldbring}.
In fact, the work of Gleason \cite{gleason} and Yamabe \cite{yamabe1,yamabe2} supplies a stronger result, known as the Gleason--Yamabe theorem, which states that every locally compact group $G$ has an open subgroup $G_0\subseteq G$ which is an inverse limit of Lie groups (we follow the convention whereby topological groups are always taken to be Hausdorff).
A topological group is said to be NSS (or, to have no small subgroups) iff there exists a neighborhood of the identity in $G$ which contains no nontrivial subgroup.
A Lie group is obviously NSS (use the exponential map), and it follows immediately from the Gleason--Yamabe theorem that a locally compact group $G$ is Lie iff it is NSS (note, however, that proving the implication NSS $\implies$ Lie is an important step in the proof of the Gleason--Yamabe theorem).
This is an amazing rigidity result, characterizing Lie groups among all locally compact groups as those satisfying a simple condition combining the group structure and the topology.

The story for groups acting on manifolds (rather than groups which are themselves manifolds) is less settled.
The main open problem is the Hilbert--Smith Conjecture, which asserts that every locally compact group acting faithfully on a connected manifold $M$ is a Lie group.
By a faithful action of $G$ on $M$, we simply mean a continuous injective homomorphism $G\to\Homeo(M)$, where $\Homeo(M)$ is given the compact-open topology (i.e.\ the topology of uniform convergence on compact subsets).
Connectivity of $M$ (which could equivalently be relaxed to having finitely many connected components) is necessary to avoid examples such as the compact group $\prod_{n\geq 0}\ZZ/2^n$ acting on $\bigsqcup_{n\geq 0}S^1$ by rotation in each factor.
The hypothesis that $G$ be locally compact may seem somewhat artificial, though clearly some hypothesis is needed to exclude $G=\Homeo(M)$, which is certainly not a Lie group.
Also, note that even when $G$ is a Lie group, the action of $G$ on $M$ can be rather pathological (as in not locally conjugate to a smooth or even a piecewise linear action).
For example, Bing constructed an involution of $S^3$ whose fixed set is a wildly embedded two-sphere \cite{binginvolution} as well as a non-manifold $X^3$ with the property that $\RR\times X^3=\RR^4$ \cite{bingproduct} (thus giving rise to a non-smoothable action of $S^1$ on the manifold $S^1\times X^3$).

Since every group acts faithfully on itself by translation, the Hilbert--Smith Conjecture implies the positive answer to the version of Hilbert's Fifth Problem discussed above.
In the other direction, the classification of locally compact groups (the Gleason--Yamabe theorem) together with Newman's theorem, to be discussed shortly, allows one to reduce the Hilbert--Smith Conjecture to the special case of $G=A_p=\ZZ_p:=\varprojlim\ZZ/p^n$, the additive group of $p$-adic integers (see Tao \cite{tao} or Lee \cite{lee}).
Note that the group $\ZZ_p$ is self-similar: the subgroups $p^k\ZZ_p\subseteq\ZZ_p$ (which form a neighborhood base at the identity as $k\to\infty$) are all isomorphic to $\ZZ_p$.
In studying the Hilbert--Smith Conjecture, we may therefore, by replacing the given action of $\ZZ_p$ by the action of $p^k\ZZ_p$ for some sufficiently large $k$, assume without loss of generality that the given action of $\ZZ_p$ is $C^0$-close to the trivial action.
One reason to expect this observation to be helpful is that, by the Gleason--Yamabe theorem, these ``small subgroups'' are precisely the salient feature of the group $\ZZ_p$ which distinguishes it from a Lie group.
It is also interesting to compare this observation with the result of Newman \cite{newman} (see also Smith \cite{smith} and Dress \cite{dress}) that nontrivial actions of compact Lie groups on manifolds cannot be arbitrarily $C^0$-close to the trivial action.

\begin{theorem}[Newman \cite{newman}]
Let $M$ be a connected manifold equipped with a metric $d$, and let $U\subseteq M$ be a nonempty open subset.
There exists $\varepsilon=\varepsilon(U,d|_U)>0$ such that for every compact Lie group $G$, the only action of $G$ on $M$ satisfying $d(x,gx)\leq\varepsilon$ for all $x\in U$ and $g\in G$ is the trivial action.
In particular, if $G$ acts trivially on $U$, then it acts trivially on all of $M$.
\end{theorem}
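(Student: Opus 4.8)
The plan is to reduce to $G=\ZZ/p$ with $p$ prime, to confine the orbits that meet $U$ to a single coordinate chart, and then to force a contradiction by a degree computation whose sharp mod‑$p$ form is supplied by Smith theory. First, the ``in particular'' clause is the case of the main statement in which $d(x,gx)=0$ for $x\in U$. For the main statement, replace $G$ by its image $\bar G\subseteq\Homeo(M)$; being a Hausdorff continuous quotient of a compact Lie group, $\bar G$ is again a compact Lie group, it acts faithfully, and it satisfies the same displacement bound on $U$. If the action is nontrivial then $\bar G\neq 1$, and every nontrivial compact Lie group contains a copy of $\ZZ/p$ for some prime $p$ --- by Cauchy's theorem if $\bar G$ is finite, and via a nontrivial torus subgroup if $\dim\bar G>0$. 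This $\ZZ/p$ acts faithfully, hence nontrivially, still with the displacement bound on $U$. Since that bound is imposed for \emph{every} group element, the entire orbit of any $x\in U$ lies in $\overline{B(x,\varepsilon)}$: there is no drift as one iterates. So it suffices to find $\varepsilon=\varepsilon(U,d|_U)>0$, uniform in $p$, ruling out nontrivial $\ZZ/p$‑actions with this property.

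Fix $x_0\in U$ and $\rho>0$ with $\overline{B(x_0,4\rho)}\subseteq U$ identified by a chart $\phi$ with a closed ball in $\RR^n$, and set $\varepsilon:=\rho/10$. For an action with the displacement bound, the compact $\ZZ/p$‑invariant set $K:=\bigcup_{g}g\,\overline{B(x_0,\rho)}$ lies in $\overline{B(x_0,2\rho)}$ and has nonempty interior; let $\Omega$ be the connected component of its interior containing $x_0$. As $gx_0\in B(x_0,\varepsilon)\subseteq\Omega$ for every $g$, $\Omega$ is $\ZZ/p$‑invariant, and $\overline\Omega\subseteq K$ lies in the domain of $\phi$. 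Suppose, for contradiction, the action is nontrivial. Then $\mathrm{Fix}(\ZZ/p)\neq M$, and Smith theory --- in the form that the fixed set of a nontrivial $\ZZ/p$‑action on a connected $n$‑manifold has empty interior --- shows that $F:=\mathrm{Fix}(\ZZ/p)\cap\Omega$ is nowhere dense, with $\dim F\le n-1$; hence $\ZZ/p$ acts freely on the dense open set $\Omega\setminus F$, the orbit map onto $(\Omega\setminus F)/(\ZZ/p)$ being a genuine $p$‑sheeted covering.

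Averaging each orbit in the coordinates of $\phi$ yields a continuous $\ZZ/p$‑invariant map $b\colon\overline\Omega\to\RR^n$ with $|b(x)-\phi(x)|\le\omega(\varepsilon)$, where $\omega$ is a modulus of continuity for $\phi$ on the fixed compact set $\overline{B(x_0,4\rho)}$ (so $\omega(\varepsilon)\to0$, independently of the action), and with $b|_F=\phi|_F$. Because $\partial\Omega\subseteq\overline{B(x_0,2\rho)}\setminus B(x_0,\rho)$, the point $q:=\phi(x_0)$ has $\phi$‑distance at least some $c_0>0$ (depending only on $\phi$) from $\phi(\partial\Omega)$; so for $\varepsilon$ small enough that $\omega(\varepsilon)<c_0/2$ --- a condition on $U$ and $d|_U$ alone --- $q$ is farther than $\omega(\varepsilon)$ from $b(\partial\Omega)$. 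Thus $\deg(b,\Omega,q)$ is defined and, via the straight‑line homotopy in $\RR^n$ from $b$ to $\phi$ (which avoids $q$ on $\partial\Omega$), equals $\deg(\phi,\Omega,q)=\pm1$ since $\phi^{-1}(q)=\{x_0\}\subseteq\Omega$. Since $b$ is $\ZZ/p$‑invariant it factors through the orbit map $\pi\colon\Omega\to\Omega/(\ZZ/p)$, so, localizing over a small ball about $q$, the degree‑$\pm1$ class is pulled back along $\pi$ from a class on the orbit space. On the other hand, over the free part $\pi$ is a $p$‑sheeted covering, whose effect on top‑dimensional compactly supported \v{C}ech cohomology is governed by $p$. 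Reconciling these two facts is where the argument must produce its contradiction.

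The difficulty is precisely this mod‑$p$ reconciliation, and it is the step I expect to be the main obstacle. Rationally the transfer splits $\pi^*$ and nothing is gained, so one must work with $\ZZ$ or $\ZZ/p$ coefficients, where the transfer of the $p$‑fold covering over the free part degenerates; one then has to exclude the possibility that this covering merely permutes $p$ connected components freely --- acting by the diagonal map $1\mapsto(1,\dots,1)$ on \v{C}ech cohomology rather than by multiplication by $p$ --- and one has to control the orientation of the generator, an issue only when $p=2$. Carrying this out is exactly the content of Smith theory together with dimension theory: the confinement of the orbits meeting $U$ to a chart forces the orbit space to be a $\ZZ/p$‑cohomology $n$‑manifold near $\pi(x_0)$, with $F$ a $\ZZ/p$‑cohomology submanifold of positive codimension, so that the $p$‑sheeted structure over the free part is genuinely incompatible with the existence of a degree‑$\pm1$ factorization. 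This Smith‑theoretic input is the technical heart of Newman's theorem.
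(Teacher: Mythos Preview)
Your architecture matches the paper's --- reduce to $G=\ZZ/p$, pass to a chart, form the averaging map $A(x)=\frac{1}{p}\sum_g gx$, and compute a degree --- but you miss the one move that makes the argument elementary, and in its place substitute an appeal to Smith theory that you yourself flag as incomplete. The missed move: do not compute the degree of $A$ over $\phi(x_0)$; instead, suppose some point $x$ near the origin is \emph{not} fixed by $\ZZ/p$, and compute the degree of $A$ over \emph{that} $x$. Since $A$ restricts to the identity on fixed points, $A^{-1}(x)$ contains no fixed points; as $p$ is prime, $A^{-1}(x)$ therefore lies entirely in the free locus. The factorization $V\to V/(\ZZ/p)\to\RR^n$ then gives immediately that the degree over $x$ is divisible by $p$ --- points in the same free orbit contribute equal local degrees because they are related by a homeomorphism commuting with $A$ --- contradicting degree $1$. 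No Smith theory, no cohomological analysis of the orbit space, no orientation issue at $p=2$.

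Two further problems. Your invocation of ``Smith theory --- in the form that the fixed set of a nontrivial $\ZZ/p$-action on a connected $n$-manifold has empty interior'' is precisely the ``in particular'' clause you are meant to prove; citing it as a black box is either circular or an appeal to machinery at least as hard as the theorem itself. The paper proves this propagation step by a second elementary degree computation: if $\ZZ/p$ fixes a half-space $V\cap(\RR_{\geq 0}\times\RR^{n-1})$, then after suitably shrinking $V$ the averaging map again has degree $1$ near the origin (computed at points inside the fixed half-space), yet would have degree divisible by $p$ at any nearby non-fixed point. Finally, your last paragraph is a program, not a proof: ``carrying this out is exactly the content of Smith theory together with dimension theory'' names the obstacle rather than resolving it, and the paper's argument shows that the obstacle was an artifact of evaluating the degree at the wrong point.
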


(Note the importance of assuming $G$ is compact: a nontrivial $\RR$-action which is arbitrarily $C^0$-close to the identity can be constructed on any manifold $M$ by integrating any nonzero vector field supported inside a small ball.)

\begin{proof}
It is enough to treat the case $G=\ZZ/p$ for any prime $p$.
Indeed, the kernel of $G\to\Homeo(M)$ is a closed normal subgroup $H\trianglelefteq G$, so $G/H$ is a Lie group and thus, if nontrivial, contains a subgroup isomorphic to $\ZZ/p$ for some prime $p$.
Note that the $\varepsilon>0$ we produce must be independent of $p$.

We deduce the desired result from the following two claims about $\ZZ/p$ actions on open subsets of $\RR^n$.
Suppose $\ZZ/p$ acts on $V\subseteq\RR^n$ and that $V$ contains the unit ball centered at the origin $0\in\RR^n$.
\begin{itemize}
\item If $d(x,gx)\leq\frac 12$ for every $x\in V$ and $g\in\ZZ/p$, then $\ZZ/p$ acts trivially on a neighborhood of the origin.
\item If $\ZZ/p$ acts trivially on $V\cap(\RR_{\geq 0}\times\RR^{n-1})$, then $\ZZ/p$ acts trivially on a neighborhood of the origin.
\end{itemize}
To deduce the desired result, note that the first claim produces an $\varepsilon=\varepsilon(U,d|_U)>0$ such that any $G$ action on $M$ satisfying $d(x,gx)\leq\varepsilon$ for all $x\in U$ acts trivially on some nonempty open subset $U_0\subseteq U$, and the second claim implies that a $G$ action on $M$ acting trivially on a nonempty open subset is globally trivial.

To prove the first claim, we consider the averaging map
\begin{align*}
A:V&\to\RR^n,\\
x&\mapsto\frac 1p\sum_{g\in\ZZ/p}gx.
\end{align*}
(I have had less than resounding success in eliciting a laugh by remarking that this proof does not work in characteristic $p$.)
The hypothesis implies that $d(x,A(x))\leq\frac 12$.
It follows that $A$ is proper over a neighborhood of $0\in\RR^n$, and hence its degree over $0\in\RR^n$ is well-defined.
In fact, the inequality $d(x,A(x))\leq\frac 12$ implies the degree of $A$ over $0\in\RR^n$ equals $1$, using the obvious linear homotopy between $A$ and the identity map.
On the other hand, if any point $x\in\RR^n$ close to the origin is not fixed by the $\ZZ/p$ action, then the factorization $V\to V/(\ZZ/p)\to\RR^n$ implies that the degree of $A$ over $x\in\RR^n$ is divisible by $p$ (note that $A^{-1}(x)\subseteq V$ is compact and contained in the open locus where $\ZZ/p$ acts freely).
We thus conclude that $\ZZ/p$ acts trivially on a neighborhood of the origin, as desired.

To prove the second claim, we again consider the degree of the averaging map near the origin.
First, we must shrink $V$ to $(\ZZ/p)(\RR_{>-\delta}\times\{x\in\RR^{n-1}:\left|x\right|<\frac 12\})$ for sufficiently small $\delta>0$; this ensures that $A$ is proper over a neighborhood of $0\in\RR^n$, so the degree is well-defined (and locally constant) there.
Again, the degree of $A$ near the origin equals $1$, this time by considering the degree over nearby points in the interior of the halfspace $\RR_{\geq 0}\times\RR^{n-1}$.
Now just as before, if any point close to the origin were not fixed by the $\ZZ/p$ action, then the degree of $A$ over that point would be divisible by $p$, a contradiction.
\end{proof}

It is, of course, tempting to try to adapt the proof of Newman's theorem to treat $\ZZ_p$ actions as well and thus prove the Hilbert--Smith Conjecture.

There are a number of partial results towards the Hilbert--Smith Conjecture.
\begin{itemize}
\item Yang \cite{ctyang} showed that for any faithful action of $\ZZ_p$ on an $n$-manifold $M$, the orbit space $M/\mathbb Z_p$ must have cohomological dimension $n+2$.
\item For $C^2$ actions, the Hilbert--Smith Conjecture is due to Bochner--Montgomery \cite{bochnermontgomery}.
\item For $C^{0,1}$ actions, the Hilbert--Smith Conjecture is due to Repov{\u{s}}--{\u{S}}{\u{c}}epin \cite{hslipschitz}.
\item For $C^{0,\frac n{n+2}+\varepsilon}$ actions, the Hilbert--Smith Conjecture is due to Maleshich \cite{hsholder}.
\item For quasiconformal actions, the Hilbert--Smith Conjecture is due to Martin \cite{hsqc}.
\item For uniformly quasisymmetric actions on doubling Ahlfors regular compact metric measure manifolds with Hausdorff dimension in $[1,n+2)$, the Hilbert--Smith Conjecture is due to Mj \cite{mj}.
\item For transitive actions, the Hilbert--Smith Conjecture is due to Montgomery--Zippin \cite[p243]{montgomeryzippin2} (see also Tao \cite[\S 1.6.4]{tao}).
\item When $\dim M=1$, the Hilbert--Smith Conjecture is an easy exercise.
\item When $\dim M=2$, the Hilbert--Smith Conjecture is due to Montgomery--Zippin \cite[p249]{montgomeryzippin2}.
\item When $\dim M=3$, the Hilbert--Smith Conjecture is due to the author \cite{pardon}.
\end{itemize}
By work of Raymond--Williams \cite{raymondwilliams}, there are faithful actions of $\mathbb Z_p$ on $n$-dimensional compact metric spaces which achieve the cohomological dimension jump of Yang \cite{ctyang} for every $n\geq 2$.
We also remark that by work of Walsh \cite[p282 Corollary 5.15.1]{walsh} there \emph{does} exist a continuous decomposition of any compact PL $n$-manifold into cantor sets of arbitrarily small diameter if $n\geq 3$ (see also Wilson \cite[Theorem 3]{wilson}).

To conclude this short survey, we present a proof of the Hilbert--Smith Conjecture in dimension two.
The proof we present is substantially different from the proof given by Montgomery--Zippin \cite[p249]{montgomeryzippin2}, which is more point set topological.
We were led to the proof presented below by trying to adapt the proof from \cite{pardon} to two dimensions.

\begin{theorem}[Hilbert--Smith Conjecture in dimension two]
Any locally compact group admitting a faithful action on a connected two-dimensional manifold is a Lie group.
\end{theorem}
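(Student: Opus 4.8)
The plan is to carry out the reduction to $p$-adic groups sketched in the introduction and then to run a mapping-degree argument in the spirit of the proof of Newman's theorem above, with the cyclic group $\ZZ/p$ replaced by $\ZZ_p$. By the Gleason--Yamabe theorem together with Newman's theorem, it suffices to prove that $\ZZ_p$ admits no faithful action on a connected two-manifold $M$; replacing $M$ by a component of its orientation double cover and $\ZZ_p$ by an open subgroup, we may assume in addition that $M$ is oriented and the action is orientation-preserving. Now fix a point $x_0\in M$ and a chart $V\subseteq\RR^2$ around it with compact closure, and use the self-similarity $p^n\ZZ_p\cong\ZZ_p$ to pass to a subgroup $p^n\ZZ_p$ --- again denoted $\ZZ_p$ --- that moves the points of $V$ by only a small amount.

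Following the proof of Newman's theorem, I would form the \emph{averaging map}
\[
A\colon V'\to\RR^2,\qquad A(x)=\int_{\ZZ_p}gx\,dg,
\]
where $V'\subseteq V$ is shrunk slightly so that the integrand stays in $V$ and $dg$ is the Haar probability measure on $\ZZ_p$; the Bochner integral makes sense because $g\mapsto gx$ is continuous and $\ZZ_p$ is compact. Exactly as before, $x$ and $A(x)$ stay uniformly close, so $A$ is proper over a neighborhood of $x_0$ and, by the linear homotopy to the identity, has local degree $1$ there. The essential new feature is that invariance of Haar measure makes $A$ constant on the orbits of $\ZZ_p$, and indeed on the orbits of every subgroup $p^m\ZZ_p$, so that $A$ descends to each quotient $V'/p^m\ZZ_p$.

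The crux, and the step I expect to be the main obstacle, is to conclude from these factorizations that the local degree of $A$ over a point with free $\ZZ_p$-orbit must be divisible by $p^m$ for every $m\geq 0$, hence is zero --- contradicting the value $1$ above and thereby forcing $\ZZ_p$ to act trivially near $x_0$. In Newman's proof the analogous divisibility by $p$ was immediate because, over the locus of free orbits, the orbit projection is a genuine $p$-fold covering. Here the projections $V'\to V'/p^m\ZZ_p$ have Cantor-set fibers, so the counting must be replaced by a homological computation --- in \v{C}ech or Borel--Moore homology --- comparing the classes carried by a free orbit and by its $p^m$ sub-orbits, and drawing on Smith-type rigidity together with Yang's theorem that the orbit spaces $V'/p^m\ZZ_p$ have cohomological dimension $4$. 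Making this precise requires controlling the fixed-point sets $\operatorname{Fix}(p^m\ZZ_p)$ --- which a priori could be complicated --- well enough to choose the base point of the degree computation so that its preimage avoids all of them; I expect that it is exactly here that being in dimension two is essential, since there such fixed sets are at most one-dimensional and tamely embedded, and that this is the point at which the techniques of \cite{pardon} must be specialized.

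Granting the local statement, the rest is routine. It yields a nonempty open subset of $M$ on which $\ZZ_p$ acts trivially; a second application of the same ideas --- the analogue for $\ZZ_p$-actions of the second claim in the proof of Newman's theorem, itself established by a degree computation of the same kind --- propagates triviality over the connected manifold $M$. Hence the subgroup $p^n\ZZ_p$ of the original group acts trivially on $M$, contradicting faithfulness, and the theorem follows.
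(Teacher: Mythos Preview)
Your proposal is not a proof: the central step --- deducing that the local degree of the averaging map $A$ is divisible by $p^m$ for every $m$ --- is explicitly left open, flagged by phrases such as ``I expect'' and ``granting the local statement.'' This is not a technicality to be filled in later; it is the entire content of the problem. The factorization $V'\to V'/\ZZ_p\to\RR^2$ does not yield any multiplicativity of degrees, because $V'/\ZZ_p$ is not a two-dimensional homology manifold: Yang's theorem, which you invoke as a tool, says precisely that this orbit space has cohomological dimension $4$, and this \emph{obstructs} rather than enables a degree computation of the kind you have in mind. Passing to the subgroups $p^m\ZZ_p$ does not help, since $p^m\ZZ_p\cong\ZZ_p$ and the quotients $V'/p^m\ZZ_p$ are equally pathological, with Cantor-set fibers over the free locus and no numerical degree whatsoever. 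Your assertion that in dimension two the fixed sets are ``at most one-dimensional and tamely embedded'' is likewise unsupported: Smith theory constrains fixed sets of finite $p$-groups, not of $\ZZ_p$ itself, and tameness of fixed sets for topological actions is exactly the sort of thing one cannot assume. The paper's remark that it is ``tempting to try to adapt the proof of Newman's theorem'' to $\ZZ_p$ is an acknowledgment that this direct adaptation fails, and no repair is known.

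The paper's argument is entirely different and never touches the orbit space. After using Newman's theorem to locate a point $x$ near which no $p^k\ZZ_p$ acts trivially, one fixes a disk $D\ni x$, passes to a small subgroup so that the action is $C^0$-close to the identity on $D$, and chooses a nearby non-fixed point $y$. Removing a small $\ZZ_p$-invariant neighborhood of the orbit $\ZZ_p y$ from $D$ leaves a planar surface $Q$ homeomorphic to a disk with $n\geq 2$ holes. The near-action of $\ZZ_p$ on $Q$ then yields a continuous homomorphism $\ZZ_p\to\pi_0\Homeo(Q,\partial D)\cong B_n$ into the braid group; its image is finite (the target is discrete) and nontrivial (it surjects onto a transitive subgroup of $S_n$, and $n\geq 2$). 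But $B_n$ is torsion-free, a contradiction. Two-dimensionality enters only through the identification of the relevant mapping class group with a braid group and the use of its torsion-freeness --- an algebraic fact that stands in for the unavailable degree argument.
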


\begin{proof}
By the reductions discussed above, it suffices to show that $\ZZ_p$ does not admit a faithful action on a connected two-dimensional manifold $M$ for any prime $p$ (in fact, our argument below applies to any profinite group in place of $\ZZ_p$).
We suppose the existence of such an action and work to derive a contradiction.

Fix a point $x\in M$ such that no subgroup $p^k\ZZ_p$ acts trivially on an open neighborhood of $x$.
The existence of such a point $x$ is guaranteed by Newman's theorem.
Indeed, suppose that, on the contrary, for every $x\in M$ there exists a $k<\infty$ such that $p^k\ZZ_p$ acts trivially on an open neighborhood of $x$.
It follows that for every compact subset $K\subseteq M$, there exists a $k<\infty$ such that $p^k\ZZ_p$ acts trivially on $K$.
Fix some $k_0<\infty$ such that $p^{k_0}\ZZ_p$ acts trivially on some nonempty open subset $U_0$ of $M$.
Now for every pre-compact connected open subset $U\subseteq M$ intersecting $U_0$, we know that $p^k\ZZ_p$ acts trivially on $U$ for some $k<\infty$.
But now applying Newman's theorem to the action of the finite group $p^{k_0}\ZZ_p/p^k\ZZ_p$ on $U$, we conclude that in fact $p^{k_0}\ZZ_p$ acts trivially on $U$.
Since $U$ was arbitrary, we conclude that $p^{k_0}\ZZ_p$ acts trivially on all of $M$, contradicting faithfulness.
This proves the existence of the desired point $x\in M$.

We will derive a contradiction by analyzing the $\ZZ_p$ action locally near $x$.

Fix a closed disk $D\subseteq M$ around $x$, and identify $D$ with the standard unit disk in $\RR^2$ with $x$ as its center.
By replacing $\ZZ_p$ with $p^k\ZZ_p$ for sufficiently large $k$, we assume that the action is extremely close to the trivial action over $D$ (though note that, of course, $\ZZ_p$ need not stabilize $D$).

\begin{figure}[htb]
\centering
\includegraphics{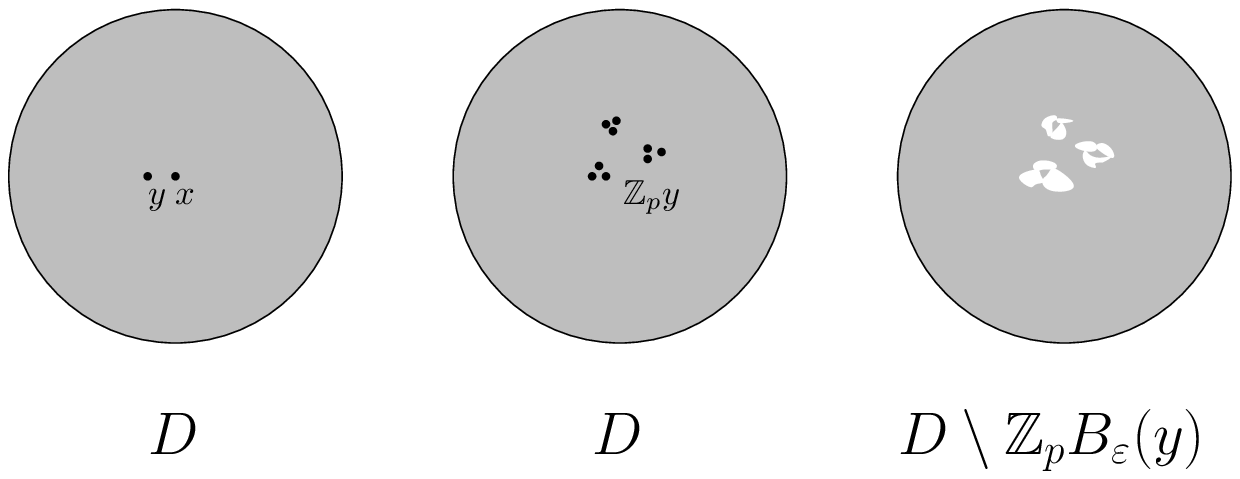}
\end{figure}

Fix a point $y\in D$ close to $x$ which is not fixed by $\ZZ_p$.
The orbit $\ZZ_py$ is thus either a cantor set or $p^k$ points for some $k\geq 1$ (depending on the stabilizer of $y$); in particular, $\rk H_1(D\setminus\ZZ_py)\geq 2$.
Let $Q$ denote the large component of $D\setminus\ZZ_pB_\varepsilon(y)$ (where $B_\varepsilon(y)$ denotes the closed $\varepsilon$-ball centered at $y$).
By choosing $\varepsilon>0$ sufficiently small, we can ensure that $\rk H_1(Q)\geq 2$ as well (by a direct limit argument).
Now $\ZZ_pB_\varepsilon(y)$ has finitely many connected components (since $\ZZ_p$ acts transitively on them and the stabilizer of the component containing $y$ is open), and hence $Q$ is finitely connected.
It follows that $Q$ is \emph{homeomorphic to the complement of $n\geq 2$ points in the closed disk}.

\begin{figure}[htb]
\centering
\includegraphics{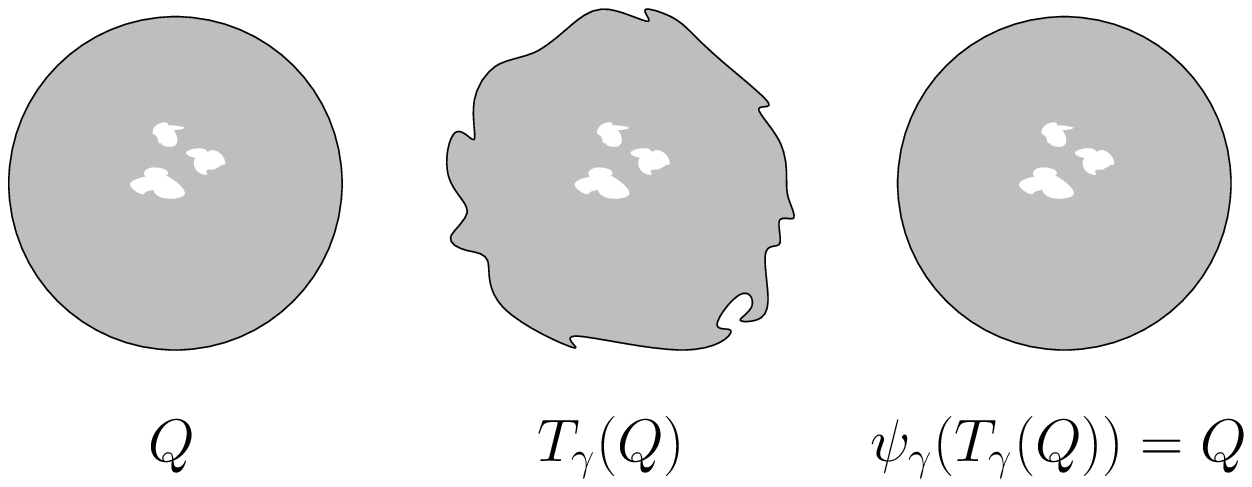}
\end{figure}

We now argue that the action of $\ZZ_p$ on $Q$ gives rise to a homomorphism to the braid group
\begin{equation*}
\ZZ_p\to B_n,
\end{equation*}
where $n\geq 2$ is the number of ``holes'' in $Q$.
Of course, $\ZZ_p$ does not truly act on $Q$, rather it sends the outer boundary $\partial D$ to a nearby circle.
Nevertheless, we may still produce a homomorphism to the braid group by observing that, since the action $T_\gamma:Q\to M$ of any $\gamma\in\ZZ_p$ is very close to the identity, there is a canonical (up to homotopy) map $\psi_\gamma:M\to M$ supported near $\partial D$ such that $\psi_\gamma\circ T_\gamma$ restricts to the identity map on $\partial D$.
Now the map $\ZZ_p\to B_n$ sends $\gamma$ to the class $[\psi_\gamma\circ T_\gamma]\in\pi_0\Homeo(Q,\partial D)=B_n$, and one can check that this map is indeed a group homomorphism.

Now the homomorphism $\ZZ_p\to B_n$ is continuous, where the target $B_n$ is equipped with the discrete topology, and hence its image is a finite subgroup of $B_n$.
Since $\ZZ_p$ acts transitively on the finitely many components of $\ZZ_pB_\varepsilon(y)$, this finite subgroup of $B_n$ must be nontrivial (as it \emph{a fortiori} must have transitive image in the symmetric group $S_n$ and $n\geq 2$).
On the other hand, the braid group $B_n$ is well-known to have no nontrivial finite subgroups (see \cite[\S 2]{braid} for example).
This contradiction finishes the proof.
\end{proof}

\textbf{Acknowledgements:} I would like to thank the referee for their excellent suggestions for improving the clarity of the proofs.

\bibliographystyle{amsplain}
\bibliography{hs2d}

\newcommand{\chsort}[1]{}
\providecommand{\bysame}{\leavevmode\hbox to3em{\hrulefill}\thinspace}
\providecommand{\MR}{\relax\ifhmode\unskip\space\fi MR }
% \MRhref is called by the amsart/book/proc definition of \MR.
\providecommand{\MRhref}[2]{%
  \href{http://www.ams.org/mathscinet-getitem?mr=#1}{#2}
}
\providecommand{\href}[2]{#2}
\begin{thebibliography}{10}

\bibitem{binginvolution}
R.~H. Bing, \emph{A homeomorphism between the {$3$}-sphere and the sum of two
  solid horned spheres}, Ann. of Math. (2) \textbf{56} (1952), 354--362.
  \MR{0049549}

\bibitem{bingproduct}
\bysame, \emph{The cartesian product of a certain nonmanifold and a line is
  {$E^{4}$}}, Ann. of Math. (2) \textbf{70} (1959), 399--412. \MR{0107228}

\bibitem{bochnermontgomery}
Salomon Bochner and Deane Montgomery, \emph{Locally compact groups of
  differentiable transformations}, Ann. of Math. (2) \textbf{47} (1946),
  639--653. \MR{0018187 (8,253c)}

\bibitem{dress}
Andreas Dress, \emph{Newman's theorems on transformation groups}, Topology
  \textbf{8} (1969), 203--207. \MR{0238353 (38 \#6629)}

\bibitem{gleason}
A.~M. Gleason, \emph{The structure of locally compact groups}, Duke Math. J.
  \textbf{18} (1951), 85--104. \MR{0039730 (12,589a)}

\bibitem{gleason2}
Andrew~M. Gleason, \emph{Groups without small subgroups}, Ann. of Math. (2)
  \textbf{56} (1952), 193--212. \MR{0049203}

\bibitem{goldbring}
Isaac Goldbring, \emph{Hilbert's fifth problem for local groups}, Ann. of Math.
  (2) \textbf{172} (2010), no.~2, 1269--1314. \MR{2680491}

\bibitem{braid}
Juan Gonz\'alez-Meneses, \emph{Basic results on braid groups}, Ann. Math.
  Blaise Pascal \textbf{18} (2011), no.~1, 15--59. \MR{2830088}

\bibitem{hilbert}
D.~{Hilbert}, \emph{{Mathematische Probleme. Vortrag, gehalten auf dem
  internationalen Mathematiker-Congress zu Paris 1900.}}, {Nachr. Ges. Wiss.
  G\"ottingen, Math.-Phys. Kl.} \textbf{1900} (1900), 253--297 (German).

\bibitem{hilbertbams}
David Hilbert, \emph{Mathematical problems}, Bull. Amer. Math. Soc. \textbf{8}
  (1902), no.~10, 437--479. \MR{1557926}

\bibitem{hirschfeld}
Joram Hirschfeld, \emph{The nonstandard treatment of {H}ilbert's fifth
  problem}, Trans. Amer. Math. Soc. \textbf{321} (1990), no.~1, 379--400.
  \MR{967314}

\bibitem{lee}
Joo~Sung Lee, \emph{Totally disconnected groups, {$p$}-adic groups and the
  {H}ilbert-{S}mith conjecture}, Commun. Korean Math. Soc. \textbf{12} (1997),
  no.~3, 691--699. \MR{1641899 (99e:54032)}

\bibitem{hsholder}
{\u{I}}ozhe Maleshich, \emph{The {H}ilbert-{S}mith conjecture for {H}\"older
  actions}, Uspekhi Mat. Nauk \textbf{52} (1997), no.~2(314), 173--174.
  \MR{1480156 (99d:57026)}

\bibitem{hsqc}
Gaven~J. Martin, \emph{The {H}ilbert-{S}mith conjecture for quasiconformal
  actions}, Electron. Res. Announc. Amer. Math. Soc. \textbf{5} (1999), 66--70
  (electronic). \MR{1694197 (2000c:30044)}

\bibitem{mj}
Mahan Mj, \emph{Pattern rigidity and the {H}ilbert--{S}mith conjecture}, Geom.
  Topol. \textbf{16} (2012), no.~2, 1205--1246. \MR{2946807}

\bibitem{montgomeryzippin}
Deane Montgomery and Leo Zippin, \emph{Small subgroups of finite-dimensional
  groups}, Ann. of Math. (2) \textbf{56} (1952), 213--241. \MR{0049204}

\bibitem{montgomeryzippin2}
\bysame, \emph{Topological transformation groups}, Interscience Publishers, New
  York-London, 1955. \MR{0073104 (17,383b)}

\bibitem{newman}
M.~H.~A. Newman, \emph{A theorem on periodic transformations of spaces}, Quart.
  J. Math. \textbf{os-2} (1931), no.~1, 1--8.

\bibitem{noticesgleason}
Richard Palais, \emph{Andrew {M}. {G}leason 1921--2008}, Notices Amer. Math.
  Soc. \textbf{56} (2009), no.~10, 1236--1267, Ethan D. Bolker, coordinating
  editor. \MR{2572754}

\bibitem{pardon}
John Pardon, \emph{The {H}ilbert-{S}mith conjecture for three-manifolds}, J.
  Amer. Math. Soc. \textbf{26} (2013), no.~3, 879--899. \MR{3037790}

\bibitem{raymondwilliams}
Frank Raymond and R.~F. Williams, \emph{Examples of {$p$}-adic transformation
  groups}, Ann. of Math. (2) \textbf{78} (1963), 92--106. \MR{0150769 (27
  \#756)}

\bibitem{hslipschitz}
Du{\u{s}}an Repov{\u{s}} and Evgenij {\u{S}}{\u{c}}epin, \emph{A proof of the
  {H}ilbert-{S}mith conjecture for actions by {L}ipschitz maps}, Math. Ann.
  \textbf{308} (1997), no.~2, 361--364. \MR{1464908 (99c:57066)}

\bibitem{smith}
P.~A. Smith, \emph{Transformations of finite period. {III}. {N}ewman's
  theorem}, Ann. of Math. (2) \textbf{42} (1941), 446--458. \MR{0004128
  (2,324c)}

\bibitem{tao}
Terence Tao, \emph{Hilbert's fifth problem and related topics}, Graduate
  Studies in Mathematics, vol. 153, American Mathematical Society, Providence,
  RI, 2014. \MR{3237440}

\bibitem{walsh}
John~J. Walsh, \emph{Light open and open mappings on manifolds. {II}}, Trans.
  Amer. Math. Soc. \textbf{217} (1976), 271--284. \MR{0394674 (52 \#15475)}

\bibitem{wilson}
David Wilson, \emph{Open mappings on manifolds and a counterexample to the
  {W}hyburn conjecture}, Duke Math. J. \textbf{40} (1973), 705--716.
  \MR{0320989 (47 \#9522)}

\bibitem{yamabe1}
Hidehiko Yamabe, \emph{On the conjecture of {I}wasawa and {G}leason}, Ann. of
  Math. (2) \textbf{58} (1953\chsort{a}), 48--54. \MR{0054613 (14,948e)}

\bibitem{yamabe2}
\bysame, \emph{A generalization of a theorem of {G}leason}, Ann. of Math. (2)
  \textbf{58} (1953\chsort{b}), 351--365. \MR{0058607 (15,398d)}

\bibitem{ctyang}
Chung-Tao Yang, \emph{{$p$}-adic transformation groups}, Michigan Math. J.
  \textbf{7} (1960), 201--218. \MR{0120310 (22 \#11065)}

\end{thebibliography}

\end{document}